\newtheorem{theorem}{Theorem}[section]
\newtheorem{lemma}[theorem]{Lemma}
\newtheorem{proposition}[theorem]{Propostion}
\newtheorem{corollary}[theorem]{Corollary}
\theoremstyle{definition}
\newtheorem{definition}[theorem]{Definition}
\theoremstyle{remark}
\numberwithin{equation}{section}
\begin{document}
	
	\begin{spacing}{1.5}
		
		\title{relations between Killing, global Jacobi and solenoidal vector fields}
		\author{Changjie Chen}
		\email{changjie.chen@stonybrook.edu}
		\maketitle

	\section{Introduction}
	On Riemannian manifolds, Killing vector fields are one of the most commonly studied types of vector fields. In this article, we will introduce two other kinds of vector fields, which also have some intuitive geometric meanings but are weaker than Killing vector fields. We will investigate the relations between these vector fields.
	
	\begin{definition}
		On a Riemannian manifold, a vector field is called a global Jacobi field if and only if it restricted on every geodesic is a Jacobi field, and a solenoidal field if and only if its divergence is zero.
	\end{definition}
	
	We will quickly show that Killing implies the other two in Section 2, and mainly discuss whether the weaker conditions imply stronger conditions with some additional conditions on the base manifold. The main theorems are,
\end{spacing}
	\begin{spacing}{1.65}
	\end{spacing}
	\begin{spacing}{1.5}
		\noindent
		\textbf{Theorem A.}
		\textit{On compact Riemannian manifolds, among Killing, global Jacobi and solenoidal, global Jacobi or Killing implies the other two.}
	\end{spacing}
	\begin{spacing}{1.65}
	\end{spacing}
		\begin{spacing}{1.5}
		\noindent
		\textbf{Theorem B.}
		\textit{On noncompact Riemannian manifolds, global Jacobi does not imply solenoidal, and global Jacobi plus solenoidal does not imply Killing.}
		\begin{spacing}{1.65}
		\end{spacing}
\end{spacing}
\begin{spacing}{1.5}

	These relationships can also be stated in the language of flows, due to Proposition 2.3. For convenience, throughout this article, we let $ M $ denote a Riemannian manifold, $ X $ a vector field on $ M $, and $ \phi_t $ the flow of $ X $.
	
	\section{Background And An Important Proposition}
	
	\begin{definition}
		Define $ A_X(V):= \nabla _V X $, and $ J_{V,W}(X):= \nabla ^2_{V,W}X-R_{V,X}W ,$ for $ V,  W \in \Gamma (TM) $.
	\end{definition}
	
	\begin{lemma}
		$ J_{(\cdot,\cdot)}(X) $ is symmetric.
	\end{lemma}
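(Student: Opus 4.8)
The plan is to reduce the claimed symmetry $J_{V,W}(X)=J_{W,V}(X)$ to the first Bianchi identity for the Levi-Civita connection. First I would recall that the second covariant derivative is $\nabla^2_{V,W}X=\nabla_V\nabla_W X-\nabla_{\nabla_V W}X$, so that the antisymmetrized second derivative picks up exactly one curvature term. Using that the Levi-Civita connection is torsion-free, $\nabla_V W-\nabla_W V=[V,W]$, one computes
\[
\nabla^2_{V,W}X-\nabla^2_{W,V}X=\nabla_V\nabla_W X-\nabla_W\nabla_V X-\nabla_{[V,W]}X=R_{V,W}X.
\]
This is the key computational identity, and I expect it to be the only step requiring genuine care, since it hinges on the sign and ordering conventions for $R$ that are implicit in Definition~2.1.

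With that in hand, I would subtract the two expressions directly from the definition:
\[
J_{V,W}(X)-J_{W,V}(X)=\bigl(\nabla^2_{V,W}X-\nabla^2_{W,V}X\bigr)-R_{V,X}W+R_{W,X}V=R_{V,W}X-R_{V,X}W+R_{W,X}V.
\]
Rewriting the middle term via the antisymmetry $-R_{V,X}W=R_{X,V}W$, the right-hand side becomes $R_{V,W}X+R_{W,X}V+R_{X,V}W$, which is precisely the cyclic sum appearing in the first Bianchi identity and therefore vanishes. Hence $J_{V,W}(X)=J_{W,V}(X)$, establishing the symmetry.

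The main obstacle here is bookkeeping rather than anything conceptual: I must confirm that the curvature convention used in Definition~2.1 is the one for which $\nabla^2_{V,W}X-\nabla^2_{W,V}X=R_{V,W}X$ holds with a plus sign, and for which the Bianchi identity reads $R_{V,W}X+R_{W,X}V+R_{X,V}W=0$; a reversed convention would flip signs uniformly and leave the conclusion intact, but the intermediate lines would need to be rewritten accordingly. A useful consistency check is to set $V=W=T$ along a geodesic, where $\nabla_T T=0$: then $J_{T,T}(X)=\nabla_T\nabla_T X-R_{T,X}T$ is exactly the Jacobi operator, which confirms both that the conventions line up and that the name ``global Jacobi field'' is the intended one.
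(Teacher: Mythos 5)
Your proof is correct and follows essentially the same route as the paper's: subtract $J_{W,V}(X)$ from $J_{V,W}(X)$ and observe the result vanishes by the curvature identity $\nabla^2_{V,W}X-\nabla^2_{W,V}X=R_{V,W}X$ together with the first Bianchi identity. The paper simply asserts the vanishing without spelling out these two ingredients, so your write-up is a fuller version of the same argument.
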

	\begin{proof}
		It follows from that for any $ V, W \in \Gamma (TM) ,$
		\begin{equation*}
			J_{V,W}(X)-J_{W,V}(X)=(\nabla ^2_{V,W}(X)-\nabla ^2_{W,V}(X))-R_{V,X}W+R_{W,X}V=0 .
		\end{equation*}
	\end{proof}

	\begin{proposition}
		Killing implies global Jacobi and solenoidal.
	\end{proposition}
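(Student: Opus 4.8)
The plan is to treat the two conclusions separately, starting from the characterization of a Killing field $X$ by the skew-symmetry of the endomorphism $A_X = \nabla_\cdot X$ with respect to $g$; equivalently, the Killing equation $g(\nabla_V X, W) + g(\nabla_W X, V) = 0$ holds for all $V, W \in \Gamma(TM)$. The solenoidal conclusion is then immediate: by definition $\operatorname{div} X = \operatorname{tr} A_X = \sum_i g(\nabla_{e_i} X, e_i)$ in a local orthonormal frame $\{e_i\}$, and the trace of a skew-symmetric endomorphism vanishes, so I would dispose of this in one line.

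For global Jacobi I would first reduce the defining condition to a pointwise tensor identity. A field restricted to a geodesic $\gamma$ with velocity $T$ is a Jacobi field exactly when $\nabla_T \nabla_T X + R_{X,T} T = 0$ along $\gamma$; since $\nabla_T T = 0$, the left-hand side equals $\nabla^2_{T,T} X - R_{T,X} T = J_{T,T}(X)$. As every tangent vector is the initial velocity of some geodesic, $X$ is global Jacobi if and only if $J_{V,V}(X) = 0$ for all $V$. By Lemma 2.2 the form $J_{(\cdot,\cdot)}(X)$ is symmetric and vector-valued, so polarization upgrades the vanishing of its diagonal to $J_{V,W}(X) = 0$ for all $V, W$; hence it suffices to prove the identity $\nabla^2_{V,W} X = R_{V,X} W$ for a Killing field.

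To establish this identity I would combine two relations. Differentiating the Killing equation covariantly, after extending the arguments to fields parallel at the base point so that the second covariant derivative reduces to the iterated one there, yields $g(\nabla^2_{V,W} X, U) + g(\nabla^2_{V,U} X, W) = 0$, i.e. antisymmetry in the last two arguments; the Ricci identity supplies $\nabla^2_{V,W} X - \nabla^2_{W,V} X = R_{V,W} X$. Writing $S(V,W,U) := g(\nabla^2_{V,W} X, U)$, these two relations pin down $S$ by the same cyclic-swap bookkeeping that recovers the Koszul formula: three substitutions express $2 S(V,W,U)$ as a sum of three curvature terms. The main obstacle, and the only genuine computation, is the final step, where I expect to rewrite those terms using the symmetries of $R$ and collapse them with the first Bianchi identity, the vanishing of the cyclic sum $g(R_{X,V} W, U) + g(R_{X,W} U, V) + g(R_{X,U} V, W)$, so that the right-hand side reduces to the single term $g(R_{V,X} W, U)$. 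Matching sign conventions and tracking which slot carries $X$ will be the chief source of potential error; once the Bianchi cancellation is verified, $J_{V,W}(X) = 0$ and the proof is complete.
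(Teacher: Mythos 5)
Your proposal is correct, and for the global Jacobi half it takes a genuinely different route from the paper. The solenoidal half is identical in both: the divergence is the trace of the skew-symmetric endomorphism $A_X$ in an orthonormal frame, hence zero. For the Jacobi half, the paper works in local coordinates: it expands $J_{\partial_j,\partial_k}(X)$ and recognizes the coefficient of $\partial_i$ as the Lie derivative $L_X\Gamma^i_{jk}$ of the Christoffel symbols, which vanishes because a Killing flow preserves the metric and hence the connection; tensoriality of $J$ then gives $J_{V,W}(X)=0$ for all $V,W$. You instead derive the classical invariant identity $\nabla^2_{V,W}X=R_{V,X}W$ (Kostant's formula) from scratch: differentiating the Killing equation with arguments extended parallelly at a point makes $S(V,W,U)=g(\nabla^2_{V,W}X,U)$ antisymmetric in $(W,U)$, the Ricci identity controls the swap in $(V,W)$, and the Koszul-style triple swap expresses $2S(V,W,U)$ as three curvature terms. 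I checked that final step against the paper's conventions: moving $X$ into the first slot by the pair symmetry $g(R_{A,B}C,D)=g(R_{C,D}A,B)$ and applying the cyclic-sum identity you cite (which is indeed a consequence of the first Bianchi identity together with the antisymmetries of $R$) collapses the three terms to exactly $2g(R_{V,X}W,U)$, so your sign bookkeeping does work out and $J_{V,W}(X)=0$ follows. Comparing the two approaches: the paper's coordinate computation makes the identification of $J$ with $L_X\nabla$ (hence the link to affine flows in Proposition 2.4) transparent, but it silently leans on the unproved fact that $L_X\Gamma=0$ for Killing $X$; your argument is coordinate-free and self-contained modulo standard curvature symmetries, and your opening reduction (every tangent vector is a geodesic velocity, then polarization via the symmetry of Lemma 2.2) has the added merit of actually justifying the equivalence between ``global Jacobi'' and $J_{(\cdot,\cdot)}(X)=0$, which the paper invokes without proof.
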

	
	\begin{iproof}
		Let $ p $ be an arbitrary point on $ M $. To show that Killing implies global Jacobi, we choose a local coordinate $ (U,x^i) $ near $ p $. Then,
		\begin{equation*}
			\begin{split}
				J_{\partial_j,\partial_k}(X) &= \nabla^2_{\partial_j,\partial_k}X - R_{\partial_j,X}\partial_k \\
				&=\nabla _{\partial_j} ([\partial_k,X])-[\nabla _{\partial_j} \partial_k,X]+\nabla _{[\partial_j,X]} \partial_k \\
				&=(\partial_j \partial_k (X^i)+X^i \partial_l(\Gamma^i_{jk})-\Gamma^l_{jk} \partial_l (X^i)+\Gamma ^i_{kl} \partial_j (X^l)+\Gamma^i_{jl} \partial_k (X^l))\partial_i.
			\end{split}
		\end{equation*}
		Since $ X $ preserves geometry, the coefficient of $ \partial_i $ is $ L_X(\Gamma)^i_{jk}=0 $, where $ \Gamma = (\Gamma ^i_{jk}) $ is the affine tensor of Christoffel symbol of the second kind. And since $ J $ is a (1,3)-tensor, $ J_{V,W}(X)=0, $ for any $ V,W \in \Gamma (TM) $, which is equivalent to that $ X $ is global Jacobi.
		
		To show that Killing implies solenoidal, choose a local geodesic frame $ \{ e_i \} $ centered at $ p $, then $ div X=\sum_{i} \langle \nabla _{e_i} X,e_i \rangle = \sum_{i} \langle A_X (e_i),e_i \rangle =0,$ since X is Killing and then $ A_X $ is skew-symmetric.
	\end{iproof}
	
	Now we introduce the following important proposition, which describes some geometric meanings of these three kinds of vector fields.
	
	\begin{proposition}\
		\begin{enumerate}
			\item $ X $ is Killing $\Leftrightarrow  \phi_t $ preserves metric $ \Leftrightarrow  A_X $ is skew-symmetric
			\item $ X $ is global Jacobi $ \Leftrightarrow  \phi_t $ preserves geodesics and the affine parameters (i.e., X is affine) $ \stackrel{\textcircled{\footnotesize{1}}}\Leftrightarrow J_{(\cdot,\cdot)}(X)=0 $
			\item $ X $ is solenoidal $ \Leftrightarrow  \phi_t $ preserves volume form $ \stackrel{\textcircled{\footnotesize{2}}}\Leftrightarrow $ div$ X=0 $
		\end{enumerate}
	\end{proposition}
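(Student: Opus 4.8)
The plan is to reduce all three ``flow-preserves'' equivalences to a single principle: for any tensor field $ S $, with $ \phi_t $ the flow of $ X $, one has $ \frac{d}{dt}\phi_t^* S = \phi_t^*(L_X S) $, so that $ \phi_t^* S = S $ for all $ t $ if and only if $ L_X S = 0 $. I would apply this in turn to the metric $ g $, the Levi--Civita connection $ \nabla $, and the Riemannian volume form $ \omega $, and then identify each Lie derivative with the stated pointwise quantity.

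For (1), I would take $ L_X g = 0 $ (equivalently $ \phi_t^* g = g $) as the meaning of Killing and compute, using that $ \nabla $ is metric and torsion-free, $ (L_X g)(V,W) = \langle \nabla_V X, W \rangle + \langle V, \nabla_W X \rangle = \langle A_X V, W\rangle + \langle A_X W, V\rangle $; hence $ L_X g = 0 $ exactly when $ A_X $ is skew-symmetric. For (3), I would use Cartan's formula $ L_X = d\,\iota_X + \iota_X d $ together with $ d\omega = 0 $ to get $ L_X \omega = (\operatorname{div} X)\,\omega $, so $ \phi_t $ preserves $ \omega $ iff $ \operatorname{div} X = 0 $; since solenoidal is \emph{defined} by $ \operatorname{div} X = 0 $, this is equivalence \textcircled{\footnotesize{2}}.

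For (2), I would first record the key identity $ (L_X \nabla)(V,W) = J_{V,W}(X) $; its coordinate form $ J_{\partial_j,\partial_k}(X) = L_X(\Gamma)^i_{jk}\,\partial_i $ is already the computation carried out in the proof of Proposition 2.3, so $ J \equiv 0 \Leftrightarrow L_X \nabla = 0 \Leftrightarrow \phi_t^* \nabla = \nabla $. Because a diffeomorphism preserves a torsion-free connection precisely when it sends affinely parametrized geodesics to affinely parametrized geodesics (the symmetric connection being recoverable from its geodesics, and $ \phi_t^*\nabla $ being torsion-free), this yields both the affine interpretation and equivalence \textcircled{\footnotesize{1}}. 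It remains to connect these with the global Jacobi condition: along a geodesic $ \gamma $ with velocity $ T $ one has $ \nabla_T T = 0 $, whence $ J_{T,T}(X)|_\gamma = \nabla_T \nabla_T (X|_\gamma) - R_{T,X}T $ is exactly the Jacobi operator applied to $ X|_\gamma $. Thus $ X $ is global Jacobi iff $ J_{T,T}(X) = 0 $ along every geodesic.

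The main obstacle is upgrading ``$ J_{T,T}(X) = 0 $ along every geodesic'' to the pointwise tensor identity $ J \equiv 0 $. Since through each point there is a geodesic with arbitrary initial velocity, the former gives $ J_{T,T}(X) = 0 $ for every tangent vector $ T $; as $ J $ is tensorial and symmetric in its two lower arguments (Lemma 2.2), polarization via $ J_{V,W} = \tfrac12(J_{V+W,V+W} - J_{V,V} - J_{W,W}) $ then forces $ J_{V,W}(X) = 0 $ for all $ V,W $, and the converse is immediate. The points requiring care are verifying tensoriality and symmetry so that polarization is legitimate, and lining up the curvature-sign convention so that $ J_{T,T}(X)=0 $ is genuinely the Jacobi equation---though since only its zero set matters, the sign is ultimately harmless.
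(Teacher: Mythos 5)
Your proof is correct, and it takes a genuinely different route from the paper on the one implication the paper proves in detail: that $J_{(\cdot,\cdot)}(X)=0$ forces $\phi_t$ to preserve affinely parametrized geodesics. The paper works along a single geodesic $\gamma_0$: with $T$ the velocity field of the variation $\gamma_t=\phi_t(\gamma_0)$, so that $[X,T]=0$, the identity $J_{T,T}(X)=[X,\nabla_T T]$ shows that the flow of $\nabla_T T$ commutes with $\phi_t$; since $\nabla_T T$ vanishes along $\gamma_0$, it vanishes along every $\gamma_t$, i.e.\ each $\gamma_t$ is an affinely parametrized geodesic. You instead identify $J_{V,W}(X)=(L_X\nabla)(V,W)$ (reusing the coordinate computation $J_{\partial_j,\partial_k}(X)=L_X(\Gamma)^i_{jk}\,\partial_i$ that appears in the proof of Proposition 2.3), apply $\frac{d}{dt}\phi_t^*\nabla=\phi_t^*(L_X\nabla)$, and use that a torsion-free connection is determined by its affinely parametrized geodesics. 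Your route costs more machinery but buys more: it gives both directions of the circled equivalence in (2) at once, the bridge from the global Jacobi definition to $J\equiv 0$ by polarization, and all of part (1) --- none of which the paper proves, dismissing them as easy or well known --- while on part (3) the two arguments coincide (Cartan's formula). Two points you should make explicit if this is written up: the principle $\frac{d}{dt}\phi_t^*S=\phi_t^*(L_XS)$ is stated for tensor fields and $\nabla$ is not one, so you need the standard extension to connections (they form an affine space modeled on symmetric $(1,2)$-tensors, on which the same formula holds); and the recoverability of a torsion-free connection from its geodesics is itself a polarization argument applied to the difference tensor of $\phi_t^*\nabla$ and $\nabla$, which deserves a line rather than a parenthesis.
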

	
		We only prove part of this theorem, because the rest is easy or already well known.
		\begin{iproof}
			$ \stackrel{\textcircled{\footnotesize{1}}}\Leftarrow: $ Let $ \gamma_0 $ be an arbitrary geodesic, $ \gamma_t = \phi_t (\gamma) $, and $ T=\dfrac{\partial}{\partial s}\gamma_t(s) $, then $ [X,T]=0 $, and from $ J_{T,T}(X)=0 $, we obtain that $ [X,\nabla_T T]=0 $. Let $ \phi'_t $ be the flow of $ \nabla_T T $, then $ \phi'_t\phi_t(\gamma_0(s)) = \phi_t\phi'_t(\gamma_0(s)) = \phi_t (\gamma_0(s)) $, since $\nabla_T T|_{t=0}=0 $. Hence, $ \phi_t $ preserves geodesics and the affine parameters.
			
			$ \textcircled{\footnotesize{2}}: $ By Cartan's magic formula, $ L_X vol =di_X vol +i_Xd vol =(div X) vol $,\\
			from which $\textcircled{\footnotesize{2}}$ follows.
		\end{iproof}
	
		Now, we can restate our main theorems in the language of flows.
		
	\end{spacing}
	\begin{spacing}{1.65}
	\end{spacing}
	\begin{spacing}{1.5}
		\noindent
		\textbf{Theorem A'.}
		\textit{On compact Riemannian manifolds, among metric, geodesics (with the affine parameters) and volume form, $ \phi_t $ preserves metric or geodesics (with the affine parameters) implies it preserves the other two.}
	\end{spacing}
	\begin{spacing}{1.65}
	\end{spacing}
	\begin{spacing}{1.5}
		\noindent
		\textbf{Theorem B'.}
		\textit{On noncompact Riemannian manifolds, if $ \phi_t $ preserves geodesics (with the affine parameters), it may not preserve volume form, and if $ \phi_t $ preserves geodesics (with the affine parameters) and volume form, it may not preserve metric.}
		\begin{spacing}{1.65}
		\end{spacing}
	\end{spacing}
	\begin{spacing}{1.5}
	
	\section{On Compact Riemannian Manifolds}
	A compact Riemannian manifold enables us to integrate over the maifold, so some weaker conditions may imply stronger conditions.
	
	\begin{lemma}
		On any Riemannian manifolds, global Jacobi implies that divergence is constant..
	\end{lemma}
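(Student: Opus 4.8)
The plan is to prove the sharper pointwise statement that $d(\operatorname{div} X) = 0$ everywhere; on a connected manifold this forces $\operatorname{div} X$ to be constant. The key is to recognize the divergence as a trace: $\operatorname{div} X = \operatorname{tr} A_X$, where $A_X\colon W \mapsto \nabla_W X$ is the $(1,1)$-tensor of Definition 2.1. Since a metric contraction commutes with covariant differentiation, for any $V \in \Gamma(TM)$ I would differentiate to obtain
\[ V(\operatorname{div} X) = \nabla_V(\operatorname{tr} A_X) = \operatorname{tr}(\nabla_V A_X). \]
Note that differentiating a single contraction introduces no curvature correction, so this identity is exact.

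The next step is to relate $\nabla_V A_X$ to the second covariant derivative of $X$. Evaluating the differentiated tensor on a vector field $W$ gives
\[ (\nabla_V A_X)(W) = \nabla_V(\nabla_W X) - \nabla_{\nabla_V W} X = \nabla^2_{V,W} X, \]
so that in any local orthonormal frame $\{e_i\}$ we have $V(\operatorname{div} X) = \sum_i \langle \nabla^2_{V,e_i} X, e_i\rangle$. Now the hypothesis enters: since $X$ is global Jacobi, $J_{(\cdot,\cdot)}(X) = 0$, that is $\nabla^2_{V,e_i} X = R_{V,X} e_i$. Substituting yields
\[ V(\operatorname{div} X) = \sum_i \langle R_{V,X} e_i, e_i\rangle, \]
which is precisely the trace of the curvature endomorphism $W \mapsto R_{V,X} W$. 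This endomorphism is skew-symmetric with respect to the metric, $\langle R_{V,X} Y, Z\rangle = -\langle R_{V,X} Z, Y\rangle$; taking $Y = Z = e_i$ shows each summand vanishes, so the whole sum is zero. Hence $V(\operatorname{div} X) = 0$ for every $V$, and $\operatorname{div} X$ is constant.

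I do not expect a serious obstacle: once the divergence is written as a trace, the argument is a short and essentially forced computation. The only points demanding care are bookkeeping ones — verifying that the contraction commutes with $\nabla$ so that no stray curvature term appears in the first display, and matching the index and sign conventions for $R_{V,X}$ with Definition 2.1 so that the global Jacobi substitution and the skew-symmetry of $R_{V,X}$ combine correctly. It is worth emphasizing that compactness plays no role here, which is exactly why the lemma holds for arbitrary Riemannian manifolds; compactness will only be needed later, to upgrade ``constant divergence'' to ``solenoidal'' by integrating over $M$ and invoking the divergence theorem.
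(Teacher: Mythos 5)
Your proposal is correct and is essentially the paper's own argument: both differentiate $\operatorname{div} X = \operatorname{tr} A_X$, use the global Jacobi identity $\nabla^2_{V,W}X = R_{V,X}W$ to replace the second covariant derivative, and then kill the resulting trace $\sum_i \langle R_{V,X}e_i, e_i\rangle$ via the skew-symmetry of the curvature tensor in its last two slots. The only difference is presentational: you compute covariantly with $\nabla_V A_X$, so any orthonormal frame works, whereas the paper fixes a geodesic frame centered at a point to absorb the Christoffel terms.
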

	\begin{iproof}
		Choose an arbitrary point $ p \in M $, and a local geodesic frame $ \{e_i\} $ centered at $ p $. Since $ X $ is global Jacobi, from Lemma 2.2, we have, $J_{e_i,e_j}(X)=0, $ for all $ i,j$, then
		\begin{equation*}
			\begin{split}
				0 &= \langle J_{e_i,e_j}(X),e_j \rangle\\
				&=\langle \nabla ^2_{e_i,e_j}X-R_{e_i,X}e_j,e_j \rangle\\
				&= \langle \nabla _{e_i} \nabla _{e_j} X,e_j \rangle\\
				&= e_i \langle \nabla _{e_j} X,e_j \rangle,
			\end{split}
		\end{equation*}
		which implies that div$ X=\sum_j \langle \nabla _{e_j} X,e_j \rangle $ is constant.
	\end{iproof}
	
	\begin{corollary}
		On compact orientable Riemannian manifolds, global Jacobi implies solenoidal.
	\end{corollary}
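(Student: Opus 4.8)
The plan is to combine Lemma 3.1 with integration over the compact manifold. Since $X$ is global Jacobi, Lemma 3.1 shows that $\operatorname{div} X$ is constant on $M$, say $\operatorname{div} X \equiv c$. It therefore suffices to prove $c = 0$.

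First I would recall, as in the proof of the flow characterization in Section 2, that Cartan's magic formula gives $L_X \mathrm{vol} = d\,i_X\mathrm{vol} = (\operatorname{div} X)\,\mathrm{vol}$. Integrating this identity over $M$ and applying Stokes' theorem --- legitimate precisely because $M$ is compact and orientable, hence closed with a globally defined volume form --- I obtain $\int_M (\operatorname{div} X)\,\mathrm{vol} = \int_M d\,i_X\mathrm{vol} = 0$.

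Next, since $\operatorname{div} X = c$ is a constant, the integral on the left equals $c\,\operatorname{Vol}(M)$. Compactness guarantees that $\operatorname{Vol}(M)$ is finite and strictly positive, so $c\,\operatorname{Vol}(M) = 0$ forces $c = 0$. Hence $\operatorname{div} X \equiv 0$, which is exactly the assertion that $X$ is solenoidal.

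I do not anticipate any real obstacle here: the entire geometric content already sits in Lemma 3.1, and the hypotheses of compactness and orientability are exactly what make the Stokes argument run and the total volume positive. The only point deserving care is that orientability is what allows a global volume form and a boundaryless integration by parts; without it one would instead argue with densities or pass to the orientation double cover, but under the stated hypotheses that subtlety does not arise.
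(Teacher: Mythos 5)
Your proof is correct and matches the paper's argument exactly: both invoke Lemma 3.1 to get $\operatorname{div}X \equiv C$ constant, then apply Stokes' theorem on the compact orientable manifold to conclude $C\,\mathrm{vol}(M)=0$, hence $C=0$. Your explicit mention of Cartan's magic formula and the positivity of the volume merely spells out details the paper leaves implicit.
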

	\begin{iproof}
		Let $ C= $ div$ X $ and by Stokes' Theorem,
		\begin{equation*}
			\begin{split}
				0 &= \int_M  div X dvol\\
				&= Cvol(M),
			\end{split}
		\end{equation*}
		then div$ X \equiv C=0 $.
	\end{iproof}
	
	\begin{theorem}
		On compact orientable Riemannian manifolds, global Jacobi implies Killing.
	\end{theorem}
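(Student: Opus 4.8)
The goal is to upgrade the global Jacobi condition, which is a second-order \emph{pointwise} identity, to the first-order \emph{symmetric} identity that defines a Killing field. I would phrase the target as the vanishing of the symmetric part $S$ of $A_X$, where $\langle S(V),W\rangle := \tfrac12\big(\langle \nabla_V X,W\rangle+\langle\nabla_W X,V\rangle\big)$; by Proposition 2.3(1), $X$ is Killing if and only if $A_X$ is skew-symmetric, i.e.\ $S\equiv 0$. Since $M$ is compact and $|S|^2\geq 0$ pointwise, it suffices to prove $\int_M|S|^2\,dvol=0$. The plan is to produce two integral identities and subtract them. Throughout I fix the convention $\mathrm{Ric}(X,Y):=\sum_i\langle R_{e_i,X}Y,e_i\rangle$, so that the curvature symmetries give $\sum_i R_{e_i,X}e_i=-\mathrm{Ric}(X)^{\sharp}$.

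The first identity uses the full strength of global Jacobi. Fixing $p$ and a local geodesic frame $\{e_i\}$ centered at $p$, summing $J_{e_i,e_i}(X)=0$ over $i$ gives $\sum_i\nabla^2_{e_i,e_i}X=\sum_i R_{e_i,X}e_i=-\mathrm{Ric}(X)$. Feeding this into the pointwise Bochner identity $\tfrac12\Delta|X|^2=|\nabla X|^2+\langle\sum_i\nabla^2_{e_i,e_i}X,\,X\rangle$ yields $\tfrac12\Delta|X|^2=|\nabla X|^2-\mathrm{Ric}(X,X)$. Integrating over the compact orientable $M$ and applying Stokes' theorem (exactly as in Corollary 3.2) annihilates the left-hand side, leaving
\begin{equation*}
	\int_M|\nabla X|^2\,dvol=\int_M\mathrm{Ric}(X,X)\,dvol.
\end{equation*}

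The second identity is the general Bochner--Yano formula, valid for any vector field, which uses only that $\mathrm{div}\,X=0$ (already supplied by Corollary 3.2). Writing $a_{ij}=\langle\nabla_{e_i}X,e_j\rangle$ one expands $|S|^2=\tfrac12\sum_{i,j}a_{ij}^2+\tfrac12\sum_{i,j}a_{ij}a_{ji}=\tfrac12|\nabla X|^2+\tfrac12\sum_{i,j}a_{ij}a_{ji}$. Integrating the cross term by parts, then commuting covariant derivatives via the Ricci identity and invoking $\mathrm{div}\,X=0$ to kill the term $\nabla(\mathrm{div}\,X)$, gives $\int_M\sum_{i,j}a_{ij}a_{ji}\,dvol=-\int_M\mathrm{Ric}(X,X)\,dvol$, hence
\begin{equation*}
	\int_M|S|^2\,dvol=\tfrac12\int_M\big(|\nabla X|^2-\mathrm{Ric}(X,X)\big)\,dvol.
\end{equation*}
Comparing the two displays forces $\int_M|S|^2\,dvol=0$, so $S\equiv 0$ and, by Proposition 2.3(1), $X$ is Killing.

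The main obstacle, and the only place genuine care is required, is the second identity: one must commute the covariant derivatives (introducing $\mathrm{Ric}$ with the correct sign) and integrate by parts twice, all while keeping the curvature and Laplacian sign conventions consistent with those of the first identity. A useful consistency check is that an honest Killing field ($S\equiv 0$) must make the right-hand side of the second display vanish, which recovers precisely the first display; this pins down every sign. Compactness and orientability enter solely to license the two applications of Stokes' theorem, and everything else is a pointwise tensor computation in a geodesic frame.
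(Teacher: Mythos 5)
Your proposal is correct, but it reaches the conclusion by a route that is organized quite differently from the paper's. The paper invokes Yano's integral formula from [1] as a black box, rewrites its first term as $X_i g^{jk}J^i_{jk}(X)$ and its last as $(\mathrm{div}\,X)^2$, kills both using the hypotheses (global Jacobi plus Corollary 3.2), and then still needs a coordinate computation ($X^{j,k}=X_{m,l}g^{mj}g^{lk}$ together with positive-definiteness of $(g^{ij})$) to see that the surviving quadratic term is non-negative and hence forces $X_{j,k}+X_{k,j}=0$. You instead derive the needed integral identities from scratch: tracing $J_{e_i,e_i}(X)=0$ gives the rough-Laplacian equation $\sum_i\nabla^2_{e_i,e_i}X=-\mathrm{Ric}(X)^\sharp$, which via the Bochner identity and Stokes yields $\int_M|\nabla X|^2\,dvol=\int_M\mathrm{Ric}(X,X)\,dvol$; separately, integration by parts plus the Ricci commutation identity and $\mathrm{div}\,X=0$ gives $\int_M|S|^2\,dvol=\tfrac12\int_M\bigl(|\nabla X|^2-\mathrm{Ric}(X,X)\bigr)\,dvol$. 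Subtracting the two is exactly Yano's formula specialized to this situation, so both proofs rest on the same underlying integral identity; but your version is self-contained (it proves rather than cites the formula), works in orthonormal frames so that the pointwise non-negativity of $|S|^2$ is automatic and the paper's coordinate positivity computation disappears, and it makes visible that only the trace $\sum_i J_{e_i,e_i}(X)=0$ of the global Jacobi hypothesis is actually used --- which is also true of the paper's argument, though less visibly, since Yano's formula only involves the contraction $g^{jk}J^i_{jk}(X)$. The cost is that you must carry out the curvature commutation and the sign bookkeeping yourself, which the paper outsources to [1]; your consistency check against honest Killing fields is the right way to control those signs, and the conventions you fixed do make both displayed identities come out correctly.
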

	\begin{iproof}
		On the compact and orientable Riemannian manifold $ M $, by Stokes' Theorem and some calculation, we have the following formula (see [1]),
		\begin{equation*}
			\begin{split}
				0 &= \int_M ((X^j_{,k}X^k)_{,j}-(X^j_{,j}X^k)_{,k} + \dfrac{1}{2}g^{jk}(X_i X^i)_{,jk}) d vol\\
				&= \int_M (X_i(g^{jk} X^i_{,jk}-R^i_l X^l) + \dfrac{1}{2}(X^{j,k}+X^{k,j})(X_{j,k}+X_{k,j})-X^j_{,j}X^k_{,k})d vol,
			\end{split}
		\end{equation*}
		where $ X_i=X^j g_{ij}, X^{j,k}=X^j_{,l}g^{kl} $ are canonical type changes on Riemannian manifolds. We rewrite this formula as
		\begin{equation}
		\begin{split}
		\int_M (X_i g^{jk} J^i_{jk}(X) + \dfrac{1}{2} (X^{j,k}+X^{k,j})(X_{j,k}+X_{k,j}) - (div X)^2)d vol =0,
		\end{split}
		\end{equation}
		where $ J_{\partial_j,\partial_k}(X)=J^i_{jk}(X)\partial_i $. Since X is global Jacobi, by Lemma 2.2 and Corollary 3.2, (3.1) implies that
		\begin{equation}
		\int_M (X^{j,k}+X^{k,j})(X_{j,k}+X_{k,j}) d vol=0.
		\end{equation}
		Next we prove that the integrand is non-negative and thus X is Killing. By direct calculation,
		\begin{equation*}
			\begin{split}
				X^{j,k} &= X^j_{,l}g^{lk} \\
				&= (\partial_l (X^j)+X^m\Gamma^j_{ml})g^{lk} \\
				&= (\partial_l (X_m)g^{mj} + X_m\partial_l(g^{mj}) + X^m\Gamma ^j_{ml})g^{lk} \\
				&= X_{m,l}g^{mj}g^{lk}+X^p (g^{mj} g_{np} \Gamma ^n_{ml} + \Gamma ^j_{pl} + g_{np} \partial_l (g^{nj})) g^{lk} \\
				&= X_{m,l}g^{mj}g^{lk} + X^p (g^{mj} \partial_l (g_{mp}) + g_{np} \partial_l (g^{nj})) g^{lk} \\
				&= X_{m,l}g^{mj}g^{lk}.
			\end{split}
		\end{equation*}
		Thus,
		\begin{equation*}
			(X^{j,k}+X^{k,j})(X_{j,k}+X_{k,j}) = (X_{j,k}+X_{k,j})(X_{m,l}+X_{l,m})g^{mj}g^{lk}.
		\end{equation*}
		Since $ G^{-1}=(g^{ij}) $ is positive definite, (3.2) implies $ X_{j,k}+X_{k,j} = 0, $ for all $ j,k $, i.e., X is Killing.
	\end{iproof}
	
	\begin{corollary}
		On compact Riemannian manifolds, global Jacobi implies Killing.
	\end{corollary}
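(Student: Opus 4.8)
The plan is to reduce the non-orientable case to Theorem 3.3 by passing to the orientable double cover. Given a compact Riemannian manifold $M$ that is not orientable, let $\pi \colon \tilde{M} \to M$ be its orientable double cover, endowed with the pullback metric $\pi^{*}g$. Then $\tilde{M}$ is compact, being a finite cover of a compact manifold, and orientable by construction, while $\pi$ is a local isometry.

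First I would lift $X$ to $\tilde{M}$. Since $\pi$ is a local diffeomorphism, there is a unique vector field $\tilde{X}$ on $\tilde{M}$ that is $\pi$-related to $X$, i.e.\ $d\pi(\tilde{X}_{\tilde{p}}) = X_{\pi(\tilde{p})}$ for every $\tilde{p} \in \tilde{M}$. The crucial observation is that both conditions in play are local and tensorial: $X$ is global Jacobi if and only if $J_{(\cdot,\cdot)}(X) = 0$, and Killing if and only if $A_X$ is skew-symmetric. Because a local isometry intertwines the Levi-Civita connections, the curvature tensors, and the metrics of $M$ and $\tilde{M}$, the tensors $A_{\tilde{X}}$ and $J_{(\cdot,\cdot)}(\tilde{X})$ are exactly the $\pi$-pullbacks of $A_X$ and $J_{(\cdot,\cdot)}(X)$.

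Consequently, since $X$ is global Jacobi on $M$, we get $J_{(\cdot,\cdot)}(\tilde{X}) = 0$, so $\tilde{X}$ is global Jacobi on $\tilde{M}$. Applying Theorem 3.3 to the compact orientable manifold $\tilde{M}$ shows that $\tilde{X}$ is Killing, that is, $A_{\tilde{X}}$ is skew-symmetric. Pulling this back down through the local isometry $\pi$, the endomorphism $A_X$ is skew-symmetric at every point of $M$, hence $X$ is Killing, completing the proof.

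The only point requiring care is the claim that global Jacobi transfers upward and Killing transfers downward under $\pi$; but since $J_{(\cdot,\cdot)}(X)$ and $A_X$ are built pointwise from the metric, connection, and curvature, all of which a local isometry preserves, this is immediate, and there is no genuine obstacle beyond setting up the double cover correctly.
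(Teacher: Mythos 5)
Your proposal is correct and follows essentially the same route as the paper: the paper's proof also passes to the orientable double cover with the lifted metric and applies Theorem 3.3 there. You have simply filled in the details the paper leaves implicit — the existence of the lifted field $\tilde{X}$ and the fact that the global Jacobi and Killing conditions transfer through the local isometry because both are tensorial conditions built from the metric, connection, and curvature.
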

	\begin{iproof}
		Here we only need to concern non-orientable Riemannian manifolds. In this case, take the orientable double covering space with lifted metric and then we apply Corollary 3.2 and Theorem 3.3.
	\end{iproof}
		
	\section{On Noncompact Riemannian Manifolds}
	Based on our disscussion in Section 2, because a general noncompact Riemannian manifold may not have a finite volume, the flow of a global Jacobi field may act on the manifold like a  dilation, which may not preserve volume form. Moreover, even it also preserves volume form, it may not preserve all the geometry of the base manifold. We conclude this in the following theorem,
	
	\begin{theorem}
		On noncompact Riemannian manifolds, global Jacobi does not imply solenoidal, and global Jacobi plus solenoidal does not imply Killing.
	\end{theorem}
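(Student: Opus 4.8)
The plan is to establish the theorem by producing two explicit counterexamples, and the most economical setting is flat Euclidean space $\mathbf{R}^n$ with its standard metric, where the curvature tensor vanishes identically and the geodesics are straight lines. The organizing observation is that for any vector field $X$ on $\mathbf{R}^n$ whose covariant derivative $A_X = \nabla X$ is parallel — in particular a constant matrix in the standard coordinates — one has $\nabla^2_{V,W}X = \nabla_V(A_X W) - A_X(\nabla_V W) = 0$, so that $J_{V,W}(X) = \nabla^2_{V,W}X - R_{V,X}W = 0$ because $R \equiv 0$. By the tensorial characterization of global Jacobi fields (the equivalence $J_{(\cdot,\cdot)}(X)=0 \Leftrightarrow$ global Jacobi from Proposition 2.4), such an $X$ is automatically global Jacobi. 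Both of my examples will be of this linear type, so the global Jacobi property comes for free and only the divergence and the symmetry type of $A_X$ must be inspected.

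For the first assertion I would take the radial dilation field $X = \sum_i x^i \partial_i$. Here $A_X = \nabla X = \mathrm{Id}$ is parallel, so $X$ is global Jacobi by the remark above; equivalently, its flow $\phi_t(x) = e^t x$ is a family of dilations, which are affine maps and hence preserve geodesics together with their affine parameters. On the other hand $\operatorname{div} X = \sum_i \langle \nabla_{\partial_i} X, \partial_i\rangle = \sum_i \langle \partial_i, \partial_i\rangle = n \neq 0$, so $X$ is not solenoidal, which settles the first half of the theorem.

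For the second assertion I would take a linear field $X = A^i_j x^j \partial_i$ with $A = (A^i_j)$ a constant matrix chosen to be trace-free but not skew-symmetric; concretely $X = x\,\partial_x - y\,\partial_y$ on $\mathbf{R}^2$ works. Again $A_X = A$ is parallel, hence $X$ is global Jacobi, and $\operatorname{div} X = \operatorname{tr} A = 1 + (-1) = 0$, so $X$ is solenoidal. But $A_X$ is symmetric rather than skew-symmetric, so by the characterization of Killing fields via skew-symmetry of $A_X$ (Proposition 2.4) $X$ is not Killing; at the level of flows, $\phi_t(x,y) = (e^t x, e^{-t} y)$ preserves both area and straight lines while stretching the metric. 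This exhibits a global Jacobi solenoidal field that fails to be Killing.

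There is no genuine analytic obstacle here, since everything reduces to the constant-coefficient identity $J_{V,W}(X)=0$; the only point requiring care is to confirm that the candidate fields are global Jacobi along \emph{every} geodesic simultaneously, which is exactly what the tensorial criterion guarantees once it is checked at the level of the parallel endomorphism $A_X$. The essential content of the theorem is the contrast with the compact case: the dilation example uses in an essential way that $\mathbf{R}^n$ has infinite volume, so that the constant divergence forced by Lemma 3.1 is free to be nonzero, precisely the obstruction that Corollary 3.2 removes under compactness.
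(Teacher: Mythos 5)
Your proposal is correct and uses essentially the same counterexamples as the paper: the radial dilation field $x\,\partial_x + y\,\partial_y$ (which you generalize to $\mathbf{R}^n$) for the failure of solenoidality, and $x\,\partial_x - y\,\partial_y$ on the Euclidean plane for the failure of Killing. In fact your write-up is more complete than the paper's, which merely names these fields, whereas you verify the global Jacobi property via the vanishing of $\nabla^2 X$ and $R$, compute the divergences, and check non-Killing through the failure of skew-symmetry of $A_X$.
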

	\begin{iproof}[Example]
		Lemma 3.1 tells us that divergence of a global Jacobi field is constant (0 if the base manifold is compact), so we can easily construct a simple example of a global Jacobi field on a noncompact Riemannian manifold with divergence a nonzero constant: $ X=x\dfrac{\partial}{\partial x}+y\dfrac{\partial}{\partial y} $ on $ E^2 $, which corresponds to standard radial dilations of Euclidean spaces. A little modification to this gives an example of that on noncompact Riemannian manifolds, even a vector field is global Jacobi and solenoidal, it may not be Killing: $ X=x\dfrac{\partial}{\partial x}-y\dfrac{\partial}{\partial y} $ on $ E^2 $.
	\end{iproof}

	\bibliographystyle{amsplain}

\begin{thebibliography}{10}
		
		\bibitem {A} K. Yano, \textit{On Harmonic and Killing Vector Field}, Ann. of Math.
		\textbf{55} (1952), 38-45.
		
	\end{thebibliography}

	\end{spacing}
\end{document}